\DeclareMathOperator{\Flim}{Flim}
\def\MPB{{\mathbb{P}}}
\def\MRB{{\mathbb{R}}}
\def\k{\kappa}
\def\a{\alpha}
\def\b{\beta}
\newtheorem{theorem}{Theorem}[section]
\newtheorem{lemma}[theorem]{Lemma}
\newtheorem{definition}[theorem]{Definition}
\newtheorem{remark}[theorem]{Remark}
\numberwithin{equation}{section}
\def\rmark{\mbox{$\rm\bf\rule{0.06em}{1.45ex}\kern-0.05em R$}}
\def\pmark{\mbox{$\rm\bf\rule{0.06em}{1.45ex}\kern-0.05em P$}}
\def\nmark{\mbox{$\rm\bf\rule{0.06em}{1.45ex}\kern-0.05em N$}}
\def\vdash{\mbox{$\rm\| \kern-0.13em -$}}
\def\rmark{\mbox{$\rm\bf\rule{0.06em}{1.45ex}\kern-0.05em R$}}
\def\pmark{\mbox{$\rm\bf\rule{0.06em}{1.45ex}\kern-0.05em P$}}
\def\nmark{\mbox{$\rm\bf\rule{0.06em}{1.45ex}\kern-0.05em N$}}
\def\vdash{\mbox{$\rm\| \kern-0.13em -$}}
\newcommand{\lusim}[1]{\smash{\underset{\raisebox{1.2pt}[0cm][0cm]{$\sim$}}
{{#1}}}}
\begin{document}

\title[Fra\"{i}ss\'{e} limit via forcing]{Fra\"{i}ss\'{e} limit via forcing}

\author[M. Golshani ]{Mohammad
  Golshani }


~
~~
~
\thanks{The  author's research has been supported by a grant from IPM (No. 97030417).} \maketitle

\begin{abstract}
Given a Fra\"{i}ss\'{e} class $\mathcal{K}$ and an infinite cardinal $\kappa,$ we define a forcing notion
which adds a structure of size $\kappa$ using elements of $\mathcal{K}$, which extends the Fra\"{i}ss\'{e} construction in the case $\k=\omega.$
\end{abstract}

\section{introduction}
Suppose $\mathcal{L}$ is a finite relational language and $\mathcal{K}$ is a class of finite $\mathcal{L}$-structures closed under substructures and isomorphisms. It is called a
Fra\"{i}ss\'{e} class if it satisfies Joint Embedding Property (JEP)  and Amalgamation Property (AP). A Fra\"{i}ss\'{e} limit, denoted $\Flim(\mathcal{K})$, of a
Fra\"{i}ss\'{e} class $\mathcal{K}$ is the unique\footnote{The existence and uniqueness follows from Fra\"{i}ss\'{e}'s theorem, See \cite{hodges}.} countable ultrahomogeneous (every isomorphism of finitely-generated substructures extends to an automorphism of $\Flim(\mathcal{K})$) structure into which every member of $\mathcal{K}$ embeds.

Given  a Fra\"{i}ss\'{e} class $\mathcal{K}$  and an infinite cardinal $\kappa,$ we would like to force a structure of size $\kappa$
which shares many properties with the Fra\"{i}ss\'{e} construction.

We first consider the special cases of linear orders, and then discuss the more general case. We also show that our construction
gives the original
Fra\"{i}ss\'{e} construction in the case $\k=\omega.$
\begin{remark}
The results of this paper can  be proved for Hrushovski's construction as well. We leave the details to the interested reader.
\end{remark}
\section{Fra\"{i}ss\'{e} limit of linear orders}
Let $\mathcal{K}$ be the class of finite linear orders and let $\kappa$ be an infinite cardinal.
\begin{definition}
A condition in $\MPB_{\k, \mathcal{K}}$ is of the form $p=(A_p, \leq_p)$, where
\begin{enumerate}
\item $A_p$ is a finite subset of $\kappa.$
\item $(A_p, \leq_p) \in \mathcal{K}$, i.e., it is a finite linear order.
\end{enumerate}
\end{definition}
The order on  $\MPB_{\k, \mathcal{K}}$  is defined in the natural way.
\begin{definition}
Suppose $p, q \in \MPB_{\k, \mathcal{K}}$. Then $p \leq q$ if and only if
\begin{enumerate}
\item $A_p \supseteq A_q,$
\item $\leq_q = \leq_p \cap (A_q \times A_q)$.
\end{enumerate}
\end{definition}
\begin{lemma}
\label{chain condition for linear orders}
$\MPB_{\k, \mathcal{K}}$   is c.c.c. (and in fact $\aleph_1$-Knaster).
\end{lemma}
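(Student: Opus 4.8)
The plan is to establish the stronger $\aleph_1$-Knaster property directly, since it immediately yields the countable chain condition: an uncountable antichain could contain no two compatible conditions, contradicting the existence of an uncountable pairwise-compatible subfamily. So fix an arbitrary family $\{p_\alpha : \alpha < \omega_1\}$ of conditions in $\MPB_{\k, \mathcal{K}}$, and the goal is to extract an uncountable $J \subseteq \omega_1$ such that $p_\alpha$ and $p_\beta$ have a common lower bound for all $\alpha, \beta \in J$.

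First I would tame the underlying sets. Each $A_{p_\alpha}$ is a finite subset of $\k$, so the $\Delta$-system lemma (applicable since $\omega_1$ is regular and the $A_{p_\alpha}$ are finite) produces an uncountable $I \subseteq \omega_1$ and a finite root $R \subseteq \k$ with $A_{p_\alpha} \cap A_{p_\beta} = R$ for all distinct $\alpha, \beta \in I$. Next I would pin down the order on the root: since $R$ is finite there are only finitely many linear orders on it, so by the pigeonhole principle there is an uncountable $J \subseteq I$ on which $\leq_{p_\alpha} \cap (R \times R)$ is a single fixed linear order $\leq_R$, independent of $\alpha \in J$.

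It remains to verify that any two conditions $p_\alpha, p_\beta$ with $\alpha, \beta \in J$ are compatible, and this is where the real content lies. The two linear orders $\leq_{p_\alpha}$ and $\leq_{p_\beta}$ agree on their entire common domain $A_{p_\alpha} \cap A_{p_\beta} = R$ (both equal $\leq_R$ there), and off the root the underlying sets are disjoint. Hence the union relation $\leq_{p_\alpha} \cup \leq_{p_\beta}$ on $A_{p_\alpha} \cup A_{p_\beta}$ is a partial order: any putative cycle in its transitive closure would have to cross between the two sets through $R$, and the agreement on $R$ together with the antisymmetry of each $\leq_{p_\alpha}, \leq_{p_\beta}$ rules this out. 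By Szpilrajn's extension theorem (equivalently, by the Amalgamation Property of the class of finite linear orders, amalgamating $A_{p_\alpha}$ and $A_{p_\beta}$ over the common substructure $(R, \leq_R)$) this partial order extends to a linear order $\leq_r$ on $A_r := A_{p_\alpha} \cup A_{p_\beta}$. Because the two sets meet exactly in $R$, the restriction $\leq_r \cap (A_{p_\alpha} \times A_{p_\alpha})$ is $\leq_{p_\alpha}$ and $\leq_r \cap (A_{p_\beta} \times A_{p_\beta})$ is $\leq_{p_\beta}$, so $r = (A_r, \leq_r)$ is a condition with $r \leq p_\alpha$ and $r \leq p_\beta$.

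I expect the only delicate point to be the verification that the union relation is genuinely a partial order, so that the amalgam exists. The $\Delta$-system step, which forces the overlap to be exactly the root $R$, and the pigeonhole step, which forces agreement on $R$, are precisely what guarantee this; everything else is routine bookkeeping.
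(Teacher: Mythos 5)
Your proof is correct and takes essentially the same route as the paper's: $\Delta$-system lemma, pigeonhole on the finitely many orders of the root, then amalgamation of two conditions over the root. In fact you are slightly more careful than the paper, whose last step asserts that $(A_\alpha \cup A_\beta, <_\alpha \cup <_\beta)$ is itself a condition even though that union is not a total (or even transitive) relation; your extra step of passing to the transitive closure and extending it to a linear order via Szpilrajn's theorem (equivalently, the amalgamation property of finite linear orders) is exactly the repair that makes the compatibility claim literally true.
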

\begin{proof}
Suppose $\{p_\a=(A_\a, <_\a): \a < \omega_1   \} \subseteq \MPB_{\k, \mathcal{K}}$. By $\Delta$-system lemma, we may assume that $(A_\a: \a< \omega_1)$
forms a $\Delta$-system with root $d$. Since there are only finitely many different orders on $d$, so we can find $I \subseteq \omega_1$ of size $\aleph_1$
such that for all $\a < \b$ in $I$, we have $<_a \cap ~d \times d = <_\b \cap ~d \times d.$ But then $q_{\a, \b}=(A_\a \cup A_\b, <_\a \cup <_\b)$ is a condition extending both
of $p_\a$ and $p_\beta.$
\end{proof}
It follows that forcing with $\MPB_{\k, \mathcal{K}}$  preserves cardinals and cofinalities. Let $G$ be $\MPB_{\k, \mathcal{K}}$-generic over $V$.
The next lemma follows by a simple density argument.
\begin{lemma}
$\k=\bigcup_{p \in G}A_p.$
\end{lemma}
Let
$\leq_G = \bigcup_{p \in G} \leq_p.$
As $G$ is a filter, $\leq_G$ is well-defined and it is a linear order on $\k.$
\begin{lemma}
\label{kappa-density}
$(\k, \leq_G)$ is $\k$-dense.
\end{lemma}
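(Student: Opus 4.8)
The plan is to verify the two defining features of $\k$-density for $(\k,\leq_G)$: that $\leq_G$ has no greatest or least element and that every open interval it determines has size exactly $\k$. Write $C_{a,b}=\{c<\k : a <_G c <_G b\}$ for the interval cut out by a pair $a<_G b$, and $I_a^-=\{c : c<_G a\}$, $I_a^+=\{c: c>_G a\}$. Since $\card{\k}=\k$, each of these sets trivially has size at most $\k$, so in every case it suffices to rule out size strictly below $\k$. The whole argument rests on one elementary \emph{insertion} move: given any condition $q$ and any ordinal $\xi\in\k\setminus A_q$, after first extending $q$ so that $a,b\in A_q$, one may further extend to a condition $q'$ with $A_{q'}=A_q\cup\{\xi\}$ whose order $\leq_{q'}$ agrees with $\leq_q$ on $A_q$ and places $\xi$ strictly between $a$ and $b$; this is possible precisely because $\leq_q$ is a linear order on a finite set, so there is always room to splice a new point into a prescribed gap. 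The same move also places a fresh $\xi$ below every element of $A_q$, or above every element.

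First I would dispatch the easy consequences. For each $\g<\k$ the set of conditions $p$ with $a,b\in A_p$ possessing some $c\in A_p$ with $c\ge\g$ strictly between $a$ and $b$ is dense and open, by the insertion move; genericity then shows $C_{a,b}$ is unbounded in the ordinal $\k$, so $\card{C_{a,b}}\ge\cf(\k)$. In particular, when $\k$ is regular this already yields $\card{C_{a,b}}=\k$, and the analogous dense sets (placing a fresh point above, below, or between prescribed elements) show that $\leq_G$ has neither a greatest nor a least element and is dense. Thus for regular $\k$ the lemma is immediate, and the genuine difficulty is the singular case, where an unbounded subset of $\k$ may have size only $\cf(\k)<\k$.

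To reach the full cardinality $\k$ for arbitrary $\k$ I would invoke the chain condition from Lemma~\ref{chain condition for linear orders}. Suppose toward a contradiction that some condition forces $\card{C_{a,b}}\le\mu$ for a cardinal $\mu<\k$; extending it, we may assume it is a condition $p$ with $a,b\in A_p$ and $a<_p b$. Fix a name $\dot f$ which $p$ forces to be a surjection from $\mu$ onto $C_{a,b}$. Because $\MPB_{\k,\mathcal{K}}$ is c.c.c., every name for an ordinal takes at most countably many possible values below $p$ (one condition witnessing each value yields an antichain), so for each $i<\mu$ the set $S_i=\{\d<\k : \exists\,p'\le p,\ p'\forces \dot f(i)=\d\}$ is countable. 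Then $T=\bigcup_{i<\mu}S_i\in V$ satisfies $\card{T}\le\mu\cdot\aleph_0=\mu<\k$ and $p\forces C_{a,b}\subseteq T$. Now choose any $\xi\in\k\setminus(T\cup A_p)$, which exists since $\card{T\cup A_p}<\k$, and apply the insertion move to obtain $p'\le p$ placing $\xi$ strictly between $a$ and $b$; then $p'\forces \xi\in C_{a,b}$, while $p\forces \xi\notin C_{a,b}$ because $\xi\notin T$, a contradiction. Hence no condition forces $\card{C_{a,b}}<\k$, so $\card{C_{a,b}}=\k$ is forced, and running the identical counting argument with $I_a^-$ and $I_a^+$ in place of $C_{a,b}$ shows each end is of size $\k$ as well. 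I expect the main obstacle to be the correct packaging of this last step: one must phrase the ``at most $<\k$ realisations'' bound precisely (this is exactly where c.c.c. enters) and confirm that the fresh ordinal $\xi$ can always be inserted into the prescribed gap, after which the contradiction is immediate.
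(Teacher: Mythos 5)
Your proof is correct and takes essentially the same route as the paper: assume some condition forces the interval to have size $<\kappa$, use the c.c.c.\ to cover the interval by a ground-model set of size $<\kappa$, and then insert a fresh point from outside that set into the gap to reach a contradiction. Your write-up simply makes explicit the covering step (via the name $\dot f$ and the countable sets $S_i$) that the paper invokes implicitly from Lemma~\ref{chain condition for linear orders}, and adds the routine unboundedness/endpoint remarks for the regular case.
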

\begin{proof}
Suppose $\alpha <_G \beta$ and $p \in  \MPB_{\k, \mathcal{K}}$. Suppose $p \Vdash$``$\{ \gamma \in \k: \a <_G \gamma <_G \b      \}$ has size $< \k$''.
By Lemma \ref{chain condition for linear orders},
we can find $I \in V$ of size less than $\k$ such that $p \Vdash$``$\check I \supseteq \{ \gamma \in \k: \a <_G \gamma <_G \b      \}$''. Let $q \leq p$ be such that
$\a, \b \in A_q$ and for some $\gamma \in A_q \setminus I, \a <_q \gamma <_q \b.$
Then
$q \Vdash$``$\gamma \notin I$ and $\a \lusim <_G \gamma \lusim <_G \b  $'',
which is a contradiction.
\end{proof}
\begin{lemma}
\label{having countable dense subsets}
Suppose $I \in V$ is an infinite subset of $\k.$ Then $(I, <_G)$ is dense in $(\k, <_G)$.
\end{lemma}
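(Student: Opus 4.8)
The plan is to reduce the statement to a single betweenness property and then establish it by a direct density argument; this is in fact simpler than the proof of Lemma~\ref{kappa-density}, because here I only need to produce \emph{one} witness rather than $\k$-many, so no appeal to the chain condition is required. Recall that $(I,<_G)$ being dense in $(\k,<_G)$ means that for all $\a <_G \b$ in $\k$ there is some $\g \in I$ with $\a <_G \g <_G \b$. Since the elements of $\k$ are ground-model ordinals, I would fix an arbitrary pair $\a,\b \in \k$ and show that it is forced that, whenever $\a <_G \b$, such a $\g$ exists.

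For each such pair I would consider the set $D_{\a,\b}$ of conditions $p \in \MPB_{\k,\mathcal{K}}$ with $\a,\b \in A_p$ such that either $\b \leq_p \a$, or there is $\g \in I \cap A_p$ with $\a <_p \g <_p \b$. The key step is to verify that $D_{\a,\b}$ is dense. Given any $r$, first extend it to $r'$ with $\a,\b \in A_{r'}$ by inserting the missing points anywhere consistent with $\leq_r$, which is legitimate since every finite linear order lies in $\mathcal{K}$. If $\b \leq_{r'} \a$ we are done. Otherwise $\a <_{r'} \b$, and since $I$ is infinite while $A_{r'}$ is finite, I can choose a fresh $\g \in I \setminus A_{r'}$ and define $q \leq r'$ by setting $A_q = A_{r'} \cup \{\g\}$ and placing $\g$ strictly between $\a$ and $\b$; the result is again a finite linear order, hence a condition, and it witnesses $q \in D_{\a,\b}$.

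Finally, by genericity $G$ meets each $D_{\a,\b}$, so there is $p \in G$ with $\a,\b \in A_p$ falling into one of the two cases. As $p \in G$, the order $<_p$ agrees with $<_G$ on $A_p$ (this is just well-definedness of $\leq_G$), so if $\a <_G \b$ the first case is impossible and the second delivers $\g \in I$ with $\a <_G \g <_G \b$, as required. The only point that needs care—and it is exactly where the hypothesis that $I$ be infinite enters—is the density of $D_{\a,\b}$: one must always be able to locate a fresh element of $I$ to drop between $\a$ and $\b$, which infinitude of $I$ together with $\mathcal{K}$ containing all finite linear orders guarantees. If one wants $(I,<_G)$ dense in the full order-topological sense, the same insertion trick applied below a given $\b$ and above a given $\a$ shows $I$ is coinitial and cofinal in $(\k,<_G)$.
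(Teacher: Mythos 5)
Your proof is correct and follows essentially the same route as the paper: both arguments insert a fresh element of $I \setminus A_p$ strictly between $\a$ and $\b$, using that $I$ is infinite while conditions are finite, and then conclude by genericity. Your packaging via the dense sets $D_{\a,\b}$ (with the built-in case $\b \leq_p \a$) is a slightly more explicit rendering of the paper's argument, which starts directly from a condition forcing $\a <_G \b$ and extends it, but the underlying idea is identical.
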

\begin{proof}
Suppose $p \in \MPB_{\k, \mathcal{K}}$ and $p \Vdash$``$\a \lusim <_G \b$''. Let $\gamma \in I \setminus A_p$ and let $q=(A_q, \leq_q),$
where $A_q=A_p \cup \{\gamma\}$ and $\a <_q \gamma <_q \b.$ Then
$q \Vdash$``$\gamma \in I$ and $\a \lusim <_G \gamma \lusim <_G \b$''.
\end{proof}
\begin{remark}
Work in $V[G].$ Let $\overline{(\k, <_G)}$ denote the completion of $(\k, <_G).$ Then $\overline{(\k, <_G)} \cong (\MRB^{V[G]}, <)$, where $(\MRB^{V[G]}, <)$ denotes the set of real numbers in $V[G]$.
\end{remark}
\begin{lemma}
Suppose $I, J \in V$ are non-empty subsets of $\k$ and $I <_G J$ (i.e., $\a <_G \b$ for all $\a \in I$ and $\b \in J$). Then $I$ and $J$ are finite, in particular, there exists $\gamma \in \k$ such that $I <_G \gamma <_G J.$
\end{lemma}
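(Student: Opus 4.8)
The plan is to exploit the fact that the generic order thoroughly interleaves any two infinite ground-model sets: the only way to have all of $I$ strictly $<_G$-below all of $J$ is for both sets to be finite, after which the existence of a separating $\gamma$ falls straight out of $\kappa$-density (Lemma~\ref{kappa-density}). So the work splits into two parts: (a) neither $I$ nor $J$ can be infinite, proved by a density/genericity argument inside $V$; and (b) the ``in particular'' clause, a one-line consequence of finiteness plus Lemma~\ref{kappa-density}.

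For part (a) I would argue by contradiction that $I$ is infinite while $J$ is nonempty, and exhibit a dense set in $V$ forcing the failure of $I <_G J$. Concretely, consider
$$D = \setof{q \in \MPB_{\k,\mathcal{K}}}{\exists \alpha \in I \cap A_q,\ \exists \beta \in J \cap A_q,\ \beta <_q \alpha}.$$
To see $D$ is dense, take any $p$, choose some $\beta \in J$, and — using that $I$ is infinite — choose $\alpha \in I \setminus (A_p \cup \{\beta\})$. Since a finite linear order admits the insertion of a new point at any position, I can extend $p$ to $q$ with $A_q = A_p \cup \{\alpha,\beta\}$ that leaves $\le_p$ untouched on $A_p$ and places $\alpha$ immediately above $\beta$, so that $\beta <_q \alpha$; this $q$ refines $p$ and lies in $D$. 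Since $D \in V$ is dense it meets $G$, so in $V[G]$ some $\alpha \in I$ and $\beta \in J$ satisfy $\beta <_G \alpha$, contradicting $I <_G J$. Hence $I$ is finite, and the symmetric argument — inserting a fresh point of $J$ just below a fixed point of $I$ — shows $J$ is finite.

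For part (b), once $I$ and $J$ are finite and nonempty I set $\alpha^* = \max_{<_G} I$ and $\beta^* = \min_{<_G} J$. From $I <_G J$ we get $\alpha^* <_G \beta^*$, and Lemma~\ref{kappa-density} supplies (in fact $\kappa$-many) points $\gamma$ with $\alpha^* <_G \gamma <_G \beta^*$; any such $\gamma$ satisfies $I <_G \gamma <_G J$.

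The only genuine content is the density step, and the point to watch is the fresh-point requirement: the extension needs a member of $I$ not already constrained by $p$, which is exactly where infinitude of $I$ is used, together with the elementary flexibility of finite linear orders under insertion. One could instead try to route part~(a) through Lemma~\ref{having countable dense subsets}, noting that an infinite $I$ is $<_G$-dense and so can be bounded above only by a maximum of $(\k,<_G)$; but that would additionally require establishing that the generic order has no endpoints, so the direct density argument above appears to be the cleaner path.
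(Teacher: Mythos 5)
Your proof is correct, but note that the paper itself states this lemma with no proof at all, so there is nothing to match against; your argument fills a genuine gap in the text. Both halves of your proposal check out: the set $D$ of conditions $q$ containing some $\alpha \in I$, $\beta \in J$ with $\beta <_q \alpha$ is indeed dense whenever $I$ is infinite (a fresh $\alpha \in I \setminus (A_p \cup \{\beta\})$ can always be inserted immediately above $\beta$ in a finite linear order), it lies in $V$ since $I, J \in V$, and meeting it refutes $I <_G J$; the symmetric argument handles infinite $J$, and the separating $\gamma$ then comes from Lemma~\ref{kappa-density} applied to $\max_{<_G} I <_G \min_{<_G} J$. The route the paper most plausibly intended is the one you mention and set aside: by Lemma~\ref{having countable dense subsets} an infinite ground-model set is dense in $(\k, <_G)$, so it cannot sit entirely below a nonempty set --- but, as you correctly observe, that deduction needs the additional fact that $(\k, <_G)$ has no maximum (itself an easy density argument: any condition can place a new point above any given $\beta$), which the paper never records. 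Your direct argument is self-contained, avoids that unstated ingredient, and is the cleaner way to justify the lemma as written; the only cosmetic remark is that the lemma's ``in particular'' is slightly misleading, since the existence of $\gamma$ uses $\k$-density and not just finiteness, and your part (b) makes that dependence explicit.
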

As the next lemma shows, the structure  $(\k, <_G)$ is rigid if $\kappa$ is uncountable.
\begin{lemma} (\cite[Theorem 2]{kostana})
\label{finite homogeneity}
Suppose $\kappa$ is an uncountable cardinal. Then $(\kappa, <_G)$ is rigid.
\end{lemma}


\section{Fra\"{i}ss\'{e} limit--The general case}
Suppose $\mathcal{L}$ is a finite relational language and $\mathcal{K}$ is a Fra\"{i}ss\'{e} class. For any
 relation symbol $R \in \mathcal{L}$, let $n_R$ denote its arity.

\begin{definition}
A condition $p$ is in $\MPB_{\k, \mathcal{K}}$ if and only if
\begin{enumerate}
\item $p \in \mathcal{K}$.
\item $A_p$, the universe of the structure $p$, is a  subset of $\k.$
\end{enumerate}
\end{definition}
The order on  $\MPB_{\k, \mathcal{K}}$  is defined in the natural way.
\begin{definition}
Suppose $p, q \in \MPB_{\k, \mathcal{K}}$. Then $p \leq q$ if and only if
\begin{enumerate}
\item $A_p \supseteq A_q,$
\item $q = p \restriction A_q$, .i.e., for any relational symbol $R \in \mathcal{L}, R^q = R^p \cap A_q^{n_R}$.
\end{enumerate}
\end{definition}
\begin{lemma}
\label{chain condition}
$\MPB_{\k, \mathcal{K}}$   is c.c.c. (and in fact $\aleph_1$-Knaster).
\end{lemma}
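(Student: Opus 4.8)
The plan is to mirror the proof of Lemma~\ref{chain condition for linear orders}, replacing the elementary fact that two compatible finite linear orders can be interleaved by an appeal to the Amalgamation Property of $\mathcal{K}$. Suppose $\{p_\alpha : \alpha < \omega_1\} \subseteq \MPB_{\k, \mathcal{K}}$ is given. First I would reduce to a highly uniform family: since each $A_{p_\alpha}$ is a finite subset of $\k$, the $\Delta$-system lemma lets me pass to an uncountable subfamily whose universes form a $\Delta$-system with root $d$ and common size $n = \card{A_{p_\alpha}}$.

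Next I would exploit the finiteness of the relational language. Writing each $A_{p_\alpha}$ in increasing ordinal order as $a^\alpha_0 < \dots < a^\alpha_{n-1}$, I can thin further so that the positions occupied by the root $d$ are the same for all $\alpha$, and --- because a finite relational language admits only finitely many $\mathcal{L}$-structures on the index set $\{0,\dots,n-1\}$ --- so that the map $a^\alpha_i \mapsto a^\beta_i$ is an isomorphism $p_\alpha \cong p_\beta$ for all $\alpha,\beta$ in the resulting uncountable set $I$. This map is the identity on $d$, so $p_\alpha \restriction d = p_\beta \restriction d =: r_0$ for all $\alpha,\beta \in I$; that is, all conditions agree on the root.

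Now fix $\alpha < \beta$ in $I$. The structures $p_\alpha$ and $p_\beta$ both extend the common substructure $r_0$ on $d = A_{p_\alpha} \cap A_{p_\beta}$, so applying the Amalgamation Property of $\mathcal{K}$ to the inclusions $r_0 \hookrightarrow p_\alpha$ and $r_0 \hookrightarrow p_\beta$ produces $D \in \mathcal{K}$ into which both embed over $r_0$. I would then transport $D$ onto a set $A_r \subseteq \k$ with $A_{p_\alpha} \cup A_{p_\beta} \subseteq A_r$, fixing $A_{p_\alpha}$ and $A_{p_\beta}$ by the identity and placing the remaining points of $D$ on fresh ordinals of $\k \setminus (A_{p_\alpha} \cup A_{p_\beta})$ (possible since that set is finite and $\k$ infinite). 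The resulting condition $r = (A_r, D)$ then satisfies $r \le p_\alpha$ and $r \le p_\beta$. As this holds for every pair from the uncountable set $I$, the family is pairwise compatible, establishing $\aleph_1$-Knaster and hence c.c.c.

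The main obstacle is precisely this transport step, and it is where strong (disjoint) amalgamation enters. For $r \restriction A_{p_\alpha} = p_\alpha$ and $r \restriction A_{p_\beta} = p_\beta$ to hold \emph{simultaneously}, the embeddings of $p_\alpha$ and $p_\beta$ into $D$ must have images meeting only in the image of $r_0$, so that no point of $A_{p_\alpha} \setminus d$ is identified with a point of $A_{p_\beta} \setminus d$; otherwise two distinct ordinals would have to name the same point of $r$. The bare Amalgamation Property permits such identifications, and when they are forced the two conditions are genuinely incompatible, so the argument --- like the interleaving used for linear orders, which is itself a disjoint amalgamation --- requires amalgams glued only along the root. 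I would therefore carry out the amalgamation in its disjoint form, which is the feature of $\mathcal{K}$ that the construction actually relies upon.
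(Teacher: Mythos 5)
Your proposal follows the same skeleton as the paper's own proof: refine to a $\Delta$-system with root $d$, thin further (using finiteness of the language) so that all conditions induce the same structure on $d$, then amalgamate any two surviving conditions over that common root. Your extra thinning to make $a^\alpha_i \mapsto a^\beta_i$ a full isomorphism is harmless but not needed; agreement on the root is all the argument uses. Where you genuinely depart from the paper is the final step, and you are right to do so. The paper simply invokes the Amalgamation Property to get $q_{\alpha,\beta}\in\mathcal{K}$ ``extending both'' $p_\alpha$ and $p_\beta$, ignoring exactly the issue you isolate: in $\MPB_{\k,\mathcal{K}}$, $r\le p$ means the literal inclusion $A_p\subseteq A_r$ is an embedding ($R^p = R^r\cap A_p^{n_R}$ for each $R$), so the amalgam must keep $A_{p_\alpha}\setminus d$ and $A_{p_\beta}\setminus d$ disjoint. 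Bare AP permits the two copies to be glued outside the root, and when such gluing is forced, no condition whose universe contains $A_{p_\alpha}\cup A_{p_\beta}$ can restrict correctly to both.

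Moreover, your closing remark that disjoint amalgamation is ``the feature the construction actually relies upon'' is not pedantry but a genuine correction: the lemma as stated is false for general Fra\"{i}ss\'{e} classes, so the paper's proof cannot be repaired without an extra hypothesis. Take $\mathcal{L}=\{E\}$ with $E$ binary, and let $\mathcal{K}$ be the class of finite graphs in which every vertex has degree at most one (finite partial matchings). This is a Fra\"{i}ss\'{e} class --- amalgamate over $A$ by identifying, for each $a\in A$, the partner of $a$ on one side with its partner on the other, when both exist --- but it fails strong amalgamation. In the corresponding forcing, the conditions $p_\alpha$ with universe $\{0,\alpha\}$ and single edge $\{0,\alpha\}$, for $0<\alpha<\omega_1$, are pairwise incompatible, since a common extension would give the vertex $0$ degree two; hence $\MPB_{\k,\mathcal{K}}$ has an antichain of size $\aleph_1$ (indeed of size $\k$). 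So your proof is correct exactly under the added hypothesis that $\mathcal{K}$ has disjoint (strong) amalgamation, your transport of the disjoint amalgam onto fresh ordinals of $\k$ is sound (one can even take the induced substructure on $A_{p_\alpha}\cup A_{p_\beta}$, since $\mathcal{K}$ is closed under substructures), and that hypothesis cannot be dropped --- which is a defect of the paper's lemma and proof, not of your argument.
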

\begin{proof}
Suppose $\{p_\a=(A_\a, <_\a): \a < \omega_1   \} \subseteq \MPB_{\k, \mathcal{K}}$. By $\Delta$-system lemma, we may assume that $(A_\a: \a< \omega_1)$
forms a $\Delta$-system with root $d$. Since there are only countably many $\mathcal{K}$-structures with universe $d$, so we can find $I \subseteq \omega_1$ of size $\aleph_1$
such that for all $\a < \b$ in $I$, we have $p \restriction d  =q \restriction d.$ But then, using the amalgamation property, we can find $q_{\a, \b} \in \mathcal{K}$
which extends both of $p_\a$ and $q_\a.$
\end{proof}
It follows that forcing with $\MPB_{\k, \mathcal{K}}$  preserves cardinals and cofinalities. Let $G$ be $\MPB_{\k, \mathcal{K}}$-generic over $V$.
The next lemma follows by a simple density argument.
\begin{lemma}
$\k=\bigcup_{p \in G}A_p.$
\end{lemma}
For any relational symbol $R \in \mathcal{L}$ let
$R^G = \bigcup_{p \in G} R^p,$
where $R^p$ is the interpretation of $R$ in $p$.
As $G$ is a filter, $R^G$ is a well-defined $n_R$-ary relation on $\k.$
Consider the structure
\[
\mathcal{M}_G= (\k, R^G)_{R \in \mathcal{L}}.
\]
Then $\mathcal{M}_G$ is an $\mathcal{L}$-structure with universe $\k.$
\begin{lemma}
\label{embeddability}
Each element of $\mathcal{K}$ embeds into $\mathcal{M}_G$.
\end{lemma}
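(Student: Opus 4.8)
The plan is to run a density argument, one structure at a time. Fix $C \in \mathcal{K}$; I want to produce an embedding of $C$ into $\mathcal{M}_G$. The natural object to consider is
\[
D_C = \{ p \in \MPB_{\k, \mathcal{K}} : C \text{ embeds into the structure } p \},
\]
and the whole argument reduces to two things: that $D_C$ is dense in $\MPB_{\k, \mathcal{K}}$, and that meeting $D_C$ actually produces an embedding into $\mathcal{M}_G$ rather than merely into a single condition.

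First I would verify density. Given an arbitrary condition $q$, I want some $p \leq q$ into which $C$ embeds. This is where the Joint Embedding Property enters: applying JEP to $q$ and $C$ yields a structure $D \in \mathcal{K}$ together with embeddings $e_1 \colon q \to D$ and $e_2 \colon C \to D$. After relabelling the universe of $D$, I may assume $e_1$ is the identity on $A_q$ while the remaining points of $D$ are sent to ordinals in $\k \setminus A_q$; since $A_q$ and $D$ are finite and $\k$ is infinite there are always enough fresh ordinals for this. The resulting structure $p$, carried onto its universe $A_p \subseteq \k$, lies in $\mathcal{K}$, satisfies $p \restriction A_q = q$ (so $p \leq q$), and carries the copy of $C$ coming from $e_2$. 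Hence $p \in D_C$, and $D_C$ is dense.

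By genericity $G \cap D_C \neq \emptyset$, so I fix $p \in G$ and an embedding $f \colon C \to p$. It remains to confirm that $f$ is an embedding of $C$ into $\mathcal{M}_G$, i.e.\ that the relations computed in $\mathcal{M}_G$ on the finite set $A_p$ agree with those computed in $p$. This is where I use that $G$ is a filter: given a relation symbol $R$ and a tuple $\bar b$ from $A_p$ with $\bar b \in R^G$, there is $p' \in G$ with $\bar b \in R^{p'}$, and a common extension $r \leq p, p'$ in $G$. Since $R^p = R^r \cap A_p^{n_R}$ and $R^{p'} = R^r \cap A_{p'}^{n_R}$, we get $\bar b \in R^r$ and therefore $\bar b \in R^p$; the reverse inclusion $R^p \subseteq R^G$ is immediate. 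Thus $R^G \cap A_p^{n_R} = R^p$ for every $R$, so the substructure of $\mathcal{M}_G$ with universe $f[C]$ coincides with $p \restriction f[C] \cong C$, and $f$ embeds $C$ into $\mathcal{M}_G$.

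I expect the density step to be the main obstacle, and the key realization there is that JEP (rather than full amalgamation) is exactly the property needed to merge the given condition $q$ with the target $C$ while retaining $q$ as a restriction; the placement of the new points on fresh ordinals of $\k$ is routine but is precisely what makes the statement hold for arbitrary infinite $\k$. The filter-coherence computation in the last paragraph is standard, but it is what upgrades ``$C$ embeds into some condition of $G$'' to ``$C$ embeds into $\mathcal{M}_G$.''
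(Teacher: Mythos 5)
Your proof is correct and follows essentially the same route as the paper: the paper's proof simply declares the set $D=\{q : C \text{ embeds into } q\}$ dense and concludes by genericity, which is exactly your $D_C$. You have merely filled in the two details the paper leaves implicit --- that density follows from JEP together with relabelling onto fresh ordinals of $\k$, and that the filter property of $G$ guarantees $R^G \cap A_p^{n_R} = R^p$, so an embedding into a condition of $G$ really is an embedding into $\mathcal{M}_G$.
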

\begin{proof}
Suppose $p \in \mathcal{K}$. We may assume that $A_p$, the universe of the structure $p$, is a  subset of $\k.$ Then the set
\[
D=\{q \in \mathcal{K}: p \text{~embeds into~}q       \}
\]
is dense, from which the lemma follows.
\end{proof}
The next lemma can be proved by a simple density argument.
\begin{lemma}
\label{restriction to ground model sets}
Suppose $I \in V$ is an infinite subset of $\k.$ Then for any $R \in \mathcal{L}$ and any $(\a_1, \dots, \a_i) \in \k^{i}, i < n_R$ the sets
\[
\{ (a_{i+1}, \dots, a_{n_R}) \in I^{n_R - i}: R^G(a_1, \dots, a_{n_R})                                \}
\]
and
\[
\{ (a_{i+1}, \dots, a_{n_R}) \in I^{n_R - i}: \neg R^G(a_1, \dots, a_{n_R})                                \}
\]
are infinite; and in fact they have the same size as $I$.
\end{lemma}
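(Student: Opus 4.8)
The plan is to reduce the statement to a one–dimensional counting problem and then run an argument parallel to that of Lemma~\ref{kappa-density}. By the evident symmetry between $R^G$ and $\neg R^G$ it suffices to treat the first set; write $\vec{\a}=(\a_1,\dots,\a_i)$. To bound its size from below it is enough to fix the intermediate coordinates $a_{i+1},\dots,a_{n_R-1}$ to be some elements of $I$ chosen in $V$ (possible as $I$ is infinite), absorb them together with $\vec\a$ into a prefix $\vec\b=(\b_1,\dots,\b_{n_R-1})\in\k^{\,n_R-1}$ with all $\b_j$'s among the chosen elements of $I$, and show that
\[
W=\{\, a\in I : R^G(\b_1,\dots,\b_{n_R-1},a)\,\}
\]
has size $\power I$. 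Indeed the tuples $(\b_{i+1},\dots,\b_{n_R-1},a)$ for $a\in W$ are distinct witnesses lying in the original set, so $\power W\le$ (size of the original set) $\le\power{I^{\,n_R-i}}=\power I$, and $\power I$ infinite then yields all the asserted infinitude.

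First I would isolate the key \emph{one–point extension fact}: for every condition $p$ with $\{\b_1,\dots,\b_{n_R-1}\}\subseteq A_p$ and every $\gamma\in\k\setminus A_p$ there is $q\le p$ with $A_q=A_p\cup\{\gamma\}$ and $R^q(\b_1,\dots,\b_{n_R-1},\gamma)$, and likewise one with $\neg R^q(\b_1,\dots,\b_{n_R-1},\gamma)$. Granting this, density is immediate: given any $p$, extend it first (applying the mechanism finitely often) so that $\{\b_1,\dots,\b_{n_R-1}\}\subseteq A_p$, then adjoin a fresh $\gamma\in I\setminus A_p$ so that $R^q(\vec\b,\gamma)$ holds; such $q$ forces $\gamma\in W$. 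In particular $W$ is forced to be nonempty below every condition.

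To upgrade this to size $\power I$, I would mimic Lemma~\ref{kappa-density}. Suppose toward a contradiction that some $p\Vdash$``$\power{\dot W}<\power I$''; strengthening $p$ we may assume it forces $\power{\dot W}=\check\mu$ for a fixed cardinal $\mu<\power I$. Since $\MPB_{\k,\mathcal{K}}$ is c.c.c.\ (Lemma~\ref{chain condition}), enumerating $\dot W$ below $p$ and covering each entry by a countable ground–model set yields $S\in V$ with $\power S\le\mu<\power I$ and $p\Vdash\dot W\subseteq\check S$; as $\dot W\subseteq\check I$ is forced we may take $S\subseteq I$. Because $\power S<\power I$ and $A_p$ is finite, the set $(I\setminus S)\setminus A_p$ is nonempty, so pick $\gamma$ there and, by the one–point extension fact, choose $q\le p$ with $R^q(\vec\b,\gamma)$. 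Then $q\Vdash$``$\gamma\in\dot W$ and $\gamma\notin\check S$'', a contradiction. Hence no condition forces $\power{\dot W}<\power I$, and as $\dot W\subseteq\check I$ we conclude $\power{\dot W}=\power I$ in $V[G]$; running the identical argument with the $\neg R$–extension disposes of the second set.

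The main obstacle is precisely the one–point extension fact, i.e.\ that over any finite $p$ a fresh element can be attached so as to realize $R$ on a prescribed tuple \emph{and}, alternatively, so as to realize $\neg R$. This toggling requires that both candidate one–point structures lie in $\mathcal{K}$; it follows from the hereditary property together with amalgamation in the classes of interest, but it genuinely fails for degenerate classes — for instance when $\mathcal{K}$ forces $R$ to be the full relation, in which case the $\neg R^G$ set is empty. I would therefore either impose a mild non–degeneracy hypothesis on $\mathcal{K}$ (ruling out relation symbols whose value is identically forced) or verify the two extensions directly from amalgamation in each concrete class, the remainder being the routine density bookkeeping above.
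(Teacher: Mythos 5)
Your proposal is essentially the argument the paper intends: the paper offers no proof of this lemma beyond the remark that it follows ``by a simple density argument,'' and what you write out --- the reduction to a one-variable set $W$, the density of conditions putting a fresh element of $I$ into $W$, and the c.c.c.\ covering device copied from the proof of Lemma~\ref{kappa-density} to push the size of $W$ up to $\power{I}$ --- is exactly that argument carried out in full. The obstacle you flag at the end is not a defect of your proof but of the lemma itself: as stated, the lemma is false for degenerate Fra\"{i}ss\'{e} classes. For example, let $\mathcal{L}=\{R\}$ with $R$ binary and let $\mathcal{K}$ be the class of finite structures in which $R$ holds of \emph{every} pair; this class is closed under substructures and isomorphisms and satisfies JEP and AP, yet $R^G$ is then the full relation on $\k$, so the $\neg R^G$-set is empty. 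Dually, the class in which $R$ never holds makes the $R^G$-set empty. (With relations of arity at least $3$ one also gets failures from repeated entries in the prefix $(\a_1,\dots,\a_i)$, e.g.\ when $\mathcal{K}$ requires $R$ to hold only of tuples with pairwise distinct entries.) So your ``one-point extension fact'' is precisely the missing hypothesis under which the lemma is true; it does hold in the motivating examples (linear orders, graphs, random-structure classes), and stating it explicitly, as you propose, is the correct repair.

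One small slip in your covering step: c.c.c.\ gives a countable set of possible values for each of the $\mu$ entries of $\dot W$, so what you actually get is $\power{S}\le\max(\mu,\aleph_0)$, not $\power{S}\le\mu$. This matters only when $\power{I}=\aleph_0$ and $\mu$ is finite, in which case $S$ need not have size $<\power{I}$. Dispose of that case directly: if some $p$ forced $\power{\dot W}=\check{\mu}$ with $\mu$ finite, then applying your one-point extension fact $\mu+1$ times below $p$ produces $q\le p$ forcing $\mu+1$ distinct elements of $I$ into $\dot W$, a contradiction; hence $\mu$ is infinite and your inequality $\power{S}\le\mu<\power{I}$ is restored. (The paper's own proof of Lemma~\ref{kappa-density} has the same rough edge when $\k=\omega$.)
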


\section{Connection with the original  Fra\"{i}ss\'{e} construction}
In this section we show that the Fra\"{i}ss\'{e} construction can be obtained from the above results for the case $\kappa=\omega.$

Thus fix the language $\mathcal{L}$ and the class $\mathcal{K}$ as before. We may suppose that each $p \in \mathcal{K}$ has domain a subset of $\omega$,
so in particular $\mathcal{K}$ is countable. Let $\MPB=\MPB_{\omega, \mathcal{K}}$. We define the following dense subsets of $\MPB$:
\begin{enumerate}
\item $D_n=\{ p \in \MPB: n \in A_p           \}$, for $n<\omega$.
\item $D^{\rightarrow}_{(\mathcal{A}, \mathcal{A}'), \mathcal{B}, f}=\{p \in \MPB: p \restriction A =\mathcal{A}, A' \subseteq  A_p$ and $\exists~ q$ such that $ q \restriction B =\mathcal{B}$ and
 $f$ extends to some $\bar f: p \cong      q            \}$, where $\mathcal{A}=(A, \dots), \mathcal{B}=(B, \dots), \mathcal{A}'=(A', \dots) \in \mathcal{K},
 \mathcal{A}$ is a substructure of $\mathcal{A}'$ and $f: \mathcal{A} \cong \mathcal{B}$.

\item $D^{\leftarrow}_{\mathcal{A}, (\mathcal{B}, \mathcal{B}'), f}=\{q \in \MPB: q \restriction B =\mathcal{B}, B' \subseteq A_q$ and $\exists ~ p$ such that $ p \restriction A =\mathcal{A}$ and
 $f$ extends to some $\bar f: p \cong      q            \}$, where $\mathcal{A}=(A, \dots), \mathcal{B}=(B, \dots), \mathcal{B}'=(B', \dots) \in \mathcal{K}, \mathcal{B}$ is   a substructure of $\mathcal{B}'$ and  $f: \mathcal{A} \cong \mathcal{B}$.
\end{enumerate}
Note that
\[
\{ D_n\} \cup \{ D^{\rightarrow}_{(\mathcal{A}, \mathcal{A}'), \mathcal{B}, f}\} \cup \{D^{\leftarrow}_{\mathcal{A}, (\mathcal{B}, \mathcal{B}'), f}\}
\]
is countable and hence
 by the Rasiowa-Sikorski lemma,  we can find a filter $G \subseteq \MPB$ which meets all the above dense sets.
Then the resulting structure $\mathcal{M}_G$ is isomorphic to the Fra\"{i}ss\'{e} limit of the class $\mathcal{K}$.

Mohammad Golshani,
School of Mathematics, Institute for Research in Fundamental Sciences (IPM), P.O. Box:
19395-5746, Tehran-Iran.

E-mail address: golshani.m@gmail.com

URL: http://math.ipm.ac.ir/golshani/

\end{document}